\numberwithin{equation}{section}
\newcommand{\N}{\mathbb{N}}
\newcommand{\R}{\mathbb{R}}
\newcommand{\D}{\mathbb{D}}
\newcommand{\Cm}{\mathbb{C}}
\newcommand{\intt}{\int\limits}
\newcommand{\summ}{\sum\limits}
\newcommand{\eps}{\varepsilon}
\renewcommand{\phi}{\varphi}
\newtheorem{Thm}{Theorem}[section]
\newtheorem{theorem}[Thm]{Theorem}
\newtheorem{lemma}[Thm]{Lemma}
\newtheorem{corollary}[Thm]{Corollary}
\newtheorem{remark}[Thm]{Remark}
\newtheorem{definition}{Definition}
\begin{document}
\sloppy

\title[Functionals with extrema at reproducing kernels]
{Functionals with extrema at reproducing kernels}
\author{Aleksei Kulikov}
\address{Department of Mathematical Sciences, Norwegian University of Science and Technology, NO-7491 Trondheim, Norway
\newline {\tt lyosha.kulikov@mail.ru}
}
\thanks{This work was supported  by Grant 275113 of the Research Council of Norway}
\begin{abstract} We show that certain monotone functionals on the Hardy spaces and  convex functionals on the Bergman spaces are maximized at the normalized reproducing kernels among the functions of norm $1$, thus proving the contractivity conjecture of Pavlovi\'c and of Brevig, Ortega-Cerd\`a, Seip and Zhao and the Wehrl-type entropy conjecture for the $SU(1,1)$ group of Lieb and Solovej, respectively.
\end{abstract}
\maketitle
\section{Introduction}
In this paper we will be working with several analytic function spaces in the unit disk $\D = \{ z\in \Cm : |z| < 1\}$. We begin by defining the appropriate Hardy and Bergman spaces.
\begin{definition}
For $0 < p < \infty$ we say that a function $f$ analytic in $\D$ belongs to the Hardy space $H^p$ if
$$||f||_{H^p}^p = \sup_{0 < r< 1} \frac{1}{2\pi}\intt_0^{2\pi} |f(re^{i\theta})|^p d\theta < \infty.$$
\end{definition}

To define the Bergman space we first introduce the M\"{o}bius invariant hyperbolic measure on the unit disk, which corresponds to the metric of constant negative curvature $-4\pi$. For $z = x + iy\in \D$ we define it as 
$$dm(z) = \frac{1}{(1-|z|^2)^2} \frac{dxdy}{\pi}.$$
We will also sometimes denote hyperbolic measure of the set $A$ by $|A|_H= m(A)$.
\begin{definition}
For $0 < p < \infty$ and $\alpha > 1$ we say that a function $f$ analytic in $\D$ belongs to the Bergman space $A^p_\alpha$ if
$$||f||_{A^p_\alpha}^p = \intt_\D (\alpha - 1)|f(z)|^p (1-|z|^2)^\alpha dm(z) < \infty.$$
\end{definition}

Note that  for the function $f(z) \equiv 1$ we have $||f||_{H^p} = ||f||_{A^q_\alpha} = 1$ for all admissible values of $p, q$ and $\alpha$. 

An important property of these spaces is that point evaluations are continuous in them. Specifically, for all analytic functions $f$ we have 
\begin{equation}\label{pointh}
|f(z)|^p(1-|z|^2)^\alpha \le ||f||_{A^p_\alpha}^p
\end{equation}
 and 
\begin{equation}\label{pointb}
|f(z)|^r(1-|z|^2) \le||f||_{H^r}^r.
\end{equation}
Moreover, since polynomials are dense in all of these spaces, for each fixed function $f$ the quantities on the left-hand sides of \eqref{pointh} and \eqref{pointb} tend uniformly to $0$ as $|z|\to 1$.

One of the  first questions about such spaces that one could ask is when one of them is contained in  another one. It turns out that the most interesting case to consider is when $\frac{p}{\alpha}$ is held constant, in which case we have
$$A^p_\alpha \subset A^q_\beta,\quad \frac{p}{\alpha} = \frac{q}{\beta} = r,\quad p < q$$
and $H^r$ is contained in all these spaces. Moreover, the $H^r$ norm can be evaluated as the limit of these Bergman norms in the sense that for $f\in H^r$ we have
$$||f||_{H^r} = \lim_{\alpha \to 1} ||f||_{A^{r\alpha}_\alpha}.$$
Thus it is sometimes reasonable to denote $H^r$ by $A^r_1$.

Recently, however, it was asked whether these embeddings are actually contractions, that is whether the norm $ ||f||_{A^{r\alpha}_\alpha}$ is decreasing in $\alpha$. In the case of Bergman spaces this question was asked by Lieb and Solovej \cite{9}. They showed that such contractivity implies their Wehrl-type entropy conjecture for the $SU(1,1)$ group. In the case of contractions from the Hardy space to the Bergman spaces it was asked by Pavlovi\'c in \cite{13} and by Brevig, Ortega-Cerd\`a, Seip and Zhao \cite{7} in relation to coefficient estimates for analytic functions. In this paper we confirm these conjectures and moreover we prove  more general results where we replace the function $t^r$ with a general convex or monotone function, respectively.
\begin{theorem}\label{Hardy}
Let $G:[0,\infty)\to \R$ be an increasing function. Then the maximum value of
\begin{equation}\label{hardyineq}
\intt_\D G(|f(z)|^p(1-|z|^2))dm(z)
\end{equation}
is attained for $f(z) \equiv 1$, subject to the condition that $f\in H^p$ and $||f||_{H^p}=1$.
\end{theorem}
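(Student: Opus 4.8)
The plan is to rephrase everything in terms of the point-evaluation quantity $u_f(z) := |f(z)|^p(1-|z|^2)$ and to reduce \eqref{hardyineq} to a single sharp estimate on the hyperbolic size of its super-level sets $\Omega_t := \{z\in\D : u_f(z) > t\}$. First I would eliminate the dependence on $G$ by a layer-cake argument. Since subtracting the constant $G(0)$ changes \eqref{hardyineq} by the same amount for every competitor, I may assume $G(0)=0$, so $G\ge 0$ is increasing and $G(u_f)=\int_0^\infty \mathbf 1[u_f>t]\,dG(t)$; by Tonelli $\int_\D G(u_f)\,dm=\int_0^\infty m(\Omega_t)\,dG(t)$ with $dG\ge 0$. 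Hence it suffices — and, letting $dG$ approximate a point mass, is also necessary — to prove $m(\Omega_t)\le m(\Omega_t^{(1)})$ for every $t$, where $\Omega_t^{(1)}$ is the set for $f\equiv 1$. As $u_1(z)=1-|z|^2$, the set $\Omega_t^{(1)}=\{|z|<\sqrt{1-t}\}$ is a centered hyperbolic disk and a direct computation gives $m(\Omega_t^{(1)})=\tfrac{1-t}{t}$. Thus the theorem reduces to
\[
m(\{z\in\D : |f(z)|^p(1-|z|^2) > t\}) \le \frac{1-t}{t},\qquad 0<t<1,\ \ \|f\|_{H^p}=1,
\]
with equality for $f\equiv 1$; note that \eqref{pointb} gives $u_f\le 1$, so both sides vanish for $t\ge 1$ and the right endpoint is consistent.

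Next I would reduce to outer functions. The $H^p$ norm depends only on the boundary modulus $|f^*|$, while subharmonicity of $\log|f|$ yields $\log|f|\le P[\log|f^*|]=\log|O_f|$, where $O_f$ is the outer function with the same boundary modulus. Therefore $u_f\le u_{O_f}$ pointwise, $\Omega_t(f)\subseteq\Omega_t(O_f)$ and $\|O_f\|_{H^p}=\|f\|_{H^p}$, so I may assume $f$ is outer. For such $f$ one has $|f(z)|^p=e^{\Phi(z)}$ with $\Phi$ harmonic and $\tfrac{1}{2\pi}\int_{\T}e^{\Phi}=1$, so $w:=\log u_f=\log(1-|z|^2)+\Phi$ is smooth on a neighborhood of the compact set $\overline{\Omega_t}$ (compact because, by the remark after \eqref{pointb}, $u_f\to 0$ uniformly at $\partial\D$), and there $\Delta w=-4\pi\rho$, with $\rho$ the density of $dm$.

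The crux is the super-level estimate, and the natural tool is the hyperbolic isoperimetric inequality for curvature $-4\pi$, namely $L_H(\partial\Omega)^2\ge 4\pi\,m(\Omega)\bigl(1+m(\Omega)\bigr)$, which is sharp exactly for the centered disks realizing $f\equiv 1$. Combined with the coarea formula and the exact flux identity $\oint_{\partial\Omega_t}|\nabla w|\,d\ell=4\pi\,m(\Omega_t)$ (a consequence of $\Delta w=-4\pi\rho$ on $\Omega_t$), this controls the length of $\partial\Omega_t$ in terms of $m(\Omega_t)$. The main obstacle is that the purely local combination of these facts, via Cauchy–Schwarz on the level curve, produces only the differential inequality $-t\,\mu'(t)\ge 1+\mu(t)$ for $\mu(t):=m(\Omega_t)$, which — integrated from the value $t_{\max}=\sup u_f\le 1$ where $\mu$ vanishes — bounds $\mu$ from \emph{below}, the wrong direction. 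The reason is that this argument uses only the pointwise bound $u_f\le 1$ and never the full normalization.

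To reverse the direction one must use the global constraint $\tfrac{1}{2\pi}\int_{\T}|f^*|^p=1$ — equivalently, that the circular means $\tfrac{1}{2\pi}\int_0^{2\pi}|f(re^{i\theta})|^p\,d\theta$ increase to exactly $1$ — in an essential, non-local way, rather than its weak pointwise consequence. I would attempt this by a circular symmetrization in the spirit of Baernstein's star function: the rearrangement of each circle preserves the circular distribution of $u_f$, hence the global distribution $\mu$ and the norm, and subharmonicity of the star function lets one compare $f$ with the circularly symmetric extremal without requiring the rearranged modulus to itself be analytic. This should collapse the problem to a one-dimensional radial computation in which the monotonicity of the circular means integrates to $\mu(t)\le\tfrac{1-t}{t}$, with equality only for (rotations/M\"obius images of) the constant, i.e. the normalized reproducing kernels. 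The delicate step I expect to require the most care is precisely reconciling the symmetrization with the hyperbolic weight and verifying that it converts the global mean constraint into the correct upper bound, thereby turning the wrong-way local inequality into the sharp one.
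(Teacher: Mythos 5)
Your layer-cake reduction and the target superlevel-set bound $\mu(t)\le \frac{1}{t}-1$ are exactly the paper's Theorem \ref{hardythm}, and your derivation of the differential inequality $-t\mu'(t)\ge 1+\mu(t)$ via the coarea formula, Cauchy--Schwarz on the level curve, the flux identity from $\Delta\log u=-4\pi\rho$, and the hyperbolic isoperimetric inequality is precisely the paper's Theorem \ref{monotone} (with $a=p$, $b=1$). But at the decisive moment you misjudge what this inequality is worth. You declare that it only bounds $\mu$ from below (``the wrong direction'') and then abandon it in favor of a Baernstein-type circular symmetrization, which you do not carry out: you concede that the key step --- reconciling the symmetrization with the hyperbolic weight and extracting the upper bound from the mean constraint --- is left open. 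That is a genuine gap, and it is not a routine one: after symmetrizing, the rearranged modulus is not the modulus of an analytic function, the circularly symmetric competitor is not $f\equiv 1$, and there is no evident mechanism by which the star-function machinery produces the bound $\mu(t)\le\frac1t-1$. As written, the proof is incomplete.

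The missing idea is that the differential inequality is equivalent to the statement that $g(t)=t(\mu(t)+1)$ is \emph{decreasing} on $(0,t_0)$, and monotonicity propagates an upper bound from small $t$ to all $t$ --- provided one can cap $g$ near $t=0$. The paper does this by feeding in the global normalization through the identity $\|f\|_{H^p}^p=\lim_{r\to1}\|f\|_{A^{pr}_r}^{pr}$, writing $\|f\|_{A^{pr}_r}^{pr}=c_r\int_0^{t_0}\mu(t)\,t^{r-1}\,dt$ with $c_r=r(r-1)\to0$, so that as $r\to1$ the integral concentrates at $t=0$. If $g(t_1)=c>1$ for some $t_1$, then $\mu(t)\ge \frac{c}{t}-1$ for all $t<t_1$ by monotonicity, and the limit of Bergman norms is then at least $c>1$, contradicting $\|f\|_{H^p}=1$. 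So the global constraint you correctly identified as essential enters not through symmetrization but through this limiting family of Bergman norms; with that step your argument would close, and your outer-function reduction becomes unnecessary.
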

\begin{theorem}\label{Bergman}
Let $G:[0,\infty)\to \R$ be a convex function. Then the maximum value of
\begin{equation}\label{bergmanineq}
\intt_\D G(|f(z)|^p(1-|z|^2)^\alpha)dm(z)
\end{equation}
is attained for $f(z) \equiv 1$, subject to the condition that $f\in A^p_\alpha$ and $||f||_{A^p_\alpha}=1$.
\end{theorem}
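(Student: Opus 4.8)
The plan is to reduce Theorem~\ref{Bergman} to a one-parameter family of isoperimetric comparisons for the super-level sets of $u(z):=\abs{f(z)}^p(1-\abs{z}^2)^\alpha$. After normalizing $\norm{f}_{A^p_\alpha}=1$, inequality \eqref{pointh} gives $0\le u\le 1$, and the definition of the norm gives $\intt_\D (\alpha-1)u\,dm = 1$, i.e.\ $\intt_\D u\,dm=\tfrac1{\alpha-1}$; the same holds for the kernel density $u_0(z):=(1-\abs{z}^2)^\alpha$ coming from $f\equiv 1$. First I would write every convex $G$ through its second-derivative measure, $G(x)=G(0)+G'(0)x+\intt_{(0,\infty)}(x-t)_+\,dG'(t)$ with $dG'\ge 0$, integrate over $\D$, and subtract the corresponding identity for $u_0$. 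The divergent term $G(0)\,m(\D)$ and the linear term $G'(0)\intt u\,dm$ cancel because the two total masses agree, leaving $\intt_\D G(u)\,dm-\intt_\D G(u_0)\,dm=\intt_{(0,\infty)}\bigl(\Phi_f(t)-\Phi_{u_0}(t)\bigr)\,dG'(t)$, where $\Phi_f(t):=\intt_\D (u-t)_+\,dm=\intt_t^1 N(s)\,ds$ and $N(s):=m(\{u>s\})$. Since $dG'\ge 0$, the theorem reduces to the single inequality $\Phi_f(t)\le \Phi_{u_0}(t)$ for all $t\ge0$. Here $N(s)<\infty$ for $s>0$ because, as noted after \eqref{pointb}, $u\to 0$ uniformly as $\abs{z}\to 1$, so each $\{u>s\}$ is relatively compact in $\D$.

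The heart of the matter is a differential inequality for $N$. Passing to $\tau:=\log t$ and writing $N(\tau)$ for $N(e^\tau)$, I claim that $Q(\tau):=(1+N(\tau))e^{\tau/\alpha}$ is non-increasing, the point being that $u_0$ has $N_0(\tau)=e^{-\tau/\alpha}-1$ and hence $Q\equiv 1$. To see this, fix a regular value $t$ (almost every one, by Sard) and apply the co-area formula in the hyperbolic metric $ds_H=\abs{dz}/(\sqrt\pi\,(1-\abs{z}^2))$ of curvature $-4\pi$ to get $-\tfrac{dN}{d\tau}=\intt_{\{u=t\}}\frac{d\sigma_H}{\abs{\nabla_H\log u}}$. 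Since $\log u=p\log\abs f+\alpha\log(1-\abs z^2)$ is superharmonic on the open set $\{u>t\}$ — the zeros of $f$ lie in the complementary set $\{u=0\}$, and $\Delta\log(1-\abs z^2)=-4(1-\abs z^2)^{-2}$ — the divergence theorem yields $\intt_{\{u=t\}}\abs{\nabla_H\log u}\,d\sigma_H=4\pi\alpha\,N$. Cauchy--Schwarz on $\{u=t\}$ then gives $P_H(\{u>t\})^2\le\bigl(-\tfrac{dN}{d\tau}\bigr)\,4\pi\alpha\,N$, and the hyperbolic isoperimetric inequality $P_H^2\ge 4\pi N(1+N)$ — with equality exactly for centered disks, which are the super-level sets of $u_0$ — produces $-\tfrac{dN}{d\tau}\ge\tfrac{1+N}{\alpha}$. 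This is precisely the statement that $Q'\le 0$ wherever $N>0$; on the range where $N=0$ one has $Q(\tau)=e^{\tau/\alpha}\le 1$ since $\tau\le 0$.

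Finally I would feed the monotonicity of $Q$ back into the reduced inequality. With $R:=Q-1$, the change of variables $t=e^\tau$ turns the equality of total masses into $\intt_{-\infty}^0 R(\tau)\,e^{(1-1/\alpha)\tau}\,d\tau=0$, while $N-N_0=R\,e^{-\tau/\alpha}$ turns the target difference into $\Phi_f(e^{\tau_0})-\Phi_{u_0}(e^{\tau_0})=\intt_{\tau_0}^0 R(\tau)\,e^{(1-1/\alpha)\tau}\,d\tau$. Now $R$ is $\le 0$ on the range where $N=0$ and non-increasing where $N>0$, so it changes sign at most once, from $+$ to $-$; combined with the vanishing of its weighted integral against the positive weight $e^{(1-1/\alpha)\tau}$, this forces $\intt_{\tau_0}^0 R\,e^{(1-1/\alpha)\tau}\,d\tau\le 0$ for every $\tau_0\le 0$ (split at the sign change and use $\intt_{-\infty}^0=0$). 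Hence $\Phi_f\le\Phi_{u_0}$, which closes the reduction and proves Theorem~\ref{Bergman}.

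The main obstacle is the differential inequality of the second step, together with its geometric-measure-theoretic underpinnings: justifying the co-area and divergence computations for almost every level set (handling the non-smoothness of $\{u=t\}$, the zeros of $f$, and the behaviour near $\partial\D$ by exhaustion and approximation) and correctly invoking the isoperimetric inequality for the possibly multiply-connected super-level sets. A second, more conceptual subtlety is that the differential inequality by itself is too weak: it only bounds $N$ from below by a solution that need not dominate $N_0$, and indeed the super-level sets of a general $f$ may be strictly larger than those of the kernel at intermediate levels. The crossing–averaging argument of the last step, which couples the monotonicity of $Q$ with the mass normalization, is exactly what converts this one-sided information into the required integrated (second-order) comparison.
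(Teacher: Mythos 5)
Your proof is correct, and its geometric core --- the claim that $Q(\tau)=(1+N(\tau))e^{\tau/\alpha}$ is non-increasing, proved by combining the co-area formula, Cauchy--Schwarz, Green's theorem for $\Delta\log u$, and the hyperbolic isoperimetric inequality --- is exactly the paper's Theorem \ref{monotone} (with $g(t)=t^{1/\alpha}(\mu(t)+1)$, i.e.\ $Q(\tau)=g(e^{\tau})$). Where you genuinely diverge is in how that monotonicity is converted into the statement for general convex $G$. The paper changes variables $x=t^{1-1/\alpha}$, invokes a separate extremal lemma (Lemma \ref{decreasing lemma}: among decreasing $h$ with prescribed integral, the constant one maximizes $\int hs$ whenever $s$ satisfies \eqref{weaker}), and then optimizes the resulting bound over the top level $x_0$. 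You instead decompose $G$ through its second-derivative measure, reducing everything to the single majorization $\int_{\D}(u-t)_+\,dm\le\int_{\D}(u_0-t)_+\,dm$, and prove that by a single-crossing argument: $R=Q-1$ changes sign at most once, from $+$ to $-$, and its weighted integral vanishes by the norm normalization, so every tail integral is $\le 0$. Both deductions are sound; yours is the more standard ``second-order dominance'' route and isolates a clean intermediate inequality (the Bergman-space analogue of the weak-type Theorem \ref{hardythm}), while the paper's route yields as by-products the relaxation of convexity to condition \eqref{weaker} (Remark \ref{rem}) and the strict increase of the bound in $x_0$ that identifies the reproducing kernels as the only maximizers for strictly convex $G$. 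Two small points to tighten: the cancellation of the term $G(0)\,m(\D)$ is formally $\infty-\infty$, so you should, as the paper does, dispose of the case $\lim_{t\to 0^+}G(t)\ne 0$ separately (the functional is then identically $\pm\infty$); and equality in the hyperbolic isoperimetric inequality holds for all hyperbolic disks, not only centered ones, though this does not affect your argument.
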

Applying these theorems to the convex and increasing function $G(t) = t^s, s > 1$ we get that all the embeddings above between Hardy and Bergman spaces are contractions, that is we get the following corollary
\begin{corollary}
For all $0 < p < q < \infty$ and $1 < \alpha < \beta < \infty$ with $\frac{p}{\alpha}=\frac{q}{\beta} = r$ for all functions $f$ analytic in $\D$ we have
$$||f||_{A^q_\beta} \le ||f||_{A^p_\alpha} \le ||f||_{H^r}$$
with equality for $f(z) \equiv c$ for $c\in \Cm$.
\end{corollary}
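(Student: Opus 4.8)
The plan is to deduce both inequalities directly from Theorems \ref{Hardy} and \ref{Bergman} by specializing $G$ to a power function and converting the resulting extremal statements about integrals into statements about norms. Since every quantity involved is positive-homogeneous of degree one in $f$, I may assume throughout that $f\not\equiv 0$ and rescale so that the relevant normalization $\norm{f}=1$ holds; the case $f\equiv 0$ is trivial. As a preliminary I would record the normalizing integral
$$\intt_\D (1-|z|^2)^s \, dm(z) = \frac{1}{s-1},\qquad s>1,$$
obtained by passing to polar coordinates and substituting $t=|z|^2$. In particular this gives $||1||_{A^p_\alpha}=1$, consistent with the remark preceding the theorems, and it is precisely the constant that the weight $(\alpha-1)$ in the Bergman norm is designed to cancel.

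For the inequality $||f||_{A^p_\alpha}\le||f||_{H^r}$ I would apply Theorem \ref{Hardy} with Hardy exponent $r$ (playing the role of $p$ there) and $G(t)=t^\alpha$, which is increasing on $[0,\infty)$ because $\alpha>1$. Since $r\alpha=p$, the integrand becomes $(|f(z)|^r(1-|z|^2))^\alpha=|f(z)|^p(1-|z|^2)^\alpha$, so under the assumption $||f||_{H^r}=1$ the theorem yields
$$\intt_\D |f(z)|^p (1-|z|^2)^\alpha \, dm(z) \le \intt_\D (1-|z|^2)^\alpha \, dm(z) = \frac{1}{\alpha - 1}.$$
Multiplying through by $(\alpha-1)$ identifies the left-hand side with $||f||_{A^p_\alpha}^p$ and the right-hand side with $1$, so $||f||_{A^p_\alpha}\le 1=||f||_{H^r}$, as desired.

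For $||f||_{A^q_\beta}\le||f||_{A^p_\alpha}$ I would instead invoke Theorem \ref{Bergman} with exponents $p,\alpha$ and $G(t)=t^{q/p}$, which is convex because $q/p>1$. Using $p\cdot\tfrac{q}{p}=q$ and $\alpha\cdot\tfrac{q}{p}=\alpha\cdot\tfrac{\beta}{\alpha}=\beta$, the integrand $\bigl(|f(z)|^p(1-|z|^2)^\alpha\bigr)^{q/p}$ collapses to $|f(z)|^q(1-|z|^2)^\beta$; assuming $||f||_{A^p_\alpha}=1$ and comparing with $f\equiv 1$ exactly as above gives $||f||_{A^q_\beta}\le 1=||f||_{A^p_\alpha}$. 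Finally, for a constant $f\equiv c$ the definitions give $||c||_{H^r}=||c||_{A^p_\alpha}=||c||_{A^q_\beta}=|c|$ by a one-line computation, which establishes the equality case.

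There is no substantive obstacle here beyond bookkeeping, since the two theorems are being assumed: the only points requiring care are matching the exponents so that the relation $\frac{p}{\alpha}=\frac{q}{\beta}=r$ turns the power-function integrand into exactly the targeted Bergman (or Hardy) integrand, and checking that $t^s$ has the monotonicity (for Theorem \ref{Hardy}) or convexity (for Theorem \ref{Bergman}) demanded by whichever theorem is being applied. I would emphasize in the writeup that the asymmetry in the hypotheses—merely increasing in the Hardy case, convex in the Bergman case—is harmless for the corollary, as $t^s$ with $s>1$ enjoys both properties.
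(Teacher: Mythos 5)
Your proposal is correct and follows exactly the paper's route: the paper deduces the corollary in one line by applying Theorems \ref{Hardy} and \ref{Bergman} to $G(t)=t^s$ with $s>1$, and your exponent bookkeeping ($G(t)=t^{\alpha}$ with Hardy exponent $r$, and $G(t)=t^{q/p}=t^{\beta/\alpha}$ for the Bergman-to-Bergman step) is precisely the computation the paper leaves implicit. The normalization $\intt_\D (1-|z|^2)^s\,dm(z)=\frac{1}{s-1}$ and the treatment of the equality case are fine.
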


In fact, we are able to prove more general results than Theorems \ref{Hardy} and \ref{Bergman}. Specifically, in the case of Hardy spaces we are able to prove a sharp bound for the hyperbolic measure of the superlevel sets of the function $|f(z)|^p(1-|z|^2)$, thus verifying also Conjecture $2$ from \cite{7}, while in the case of Bergman spaces our proof allows us to consider some not necessarily convex functions, see Theorem \ref{hardythm} and Remark \ref{rem} respectively. 

It is important to mention that the M\"{o}bius group acts not only on the measure $m$ but on the spaces $A^p_\alpha$ as well. Specifically, given a function $f\in A^p_\alpha$ and $w\in \D$, the function 
$$g(z) = f\left(\frac{z-\bar{w}}{1-zw}\right)\frac{(1-|w|^2)^{\alpha/p}}{(1-zw)^{2\alpha/p}}$$
also belongs to the space $A^p_\alpha$ and moreover it has the same norm as $f$ and the same distribution of the function $|f(z)|^p(1-|z|^2)^\alpha$ with respect to the measure $m$. In particular, when $f(z) \equiv 1$ in this way we get $g(z) = \frac{(1-|w|^2)^{\alpha/p}}{(1-zw)^{2\alpha/p}}$ and these functions also necessarily give us the maximal value in \eqref{hardyineq} and \eqref{bergmanineq}. Note that when $p = 2$, the spaces $A^p_\alpha$ are Hilbert spaces and these functions turn out to be (normalized) reproducing kernels at the point $\bar{w}$. By analogy, we will call them reproducing kernels even if $p\ne 2$ (they are in fact reproducing kernels for the dual space).

Lastly, let us also mention that all our results also hold true for the Hardy and Bergman spaces in the upper half-plane, either by using a conformal mapping from the unit disk or by directly translating our methods. We chose to work in the unit disk, however, since it  allows us to simplify some calculations in the proof. 

 The structure of the paper is as follows. In Section 2 we prove a general monotonicity theorem for the hyperbolic measure of the  superlevel sets of analytic functions, which is an adaptation of the ingenious method from \cite{3} to the hyperbolic setting. This will also be the only Section where we use analyticity and hyperbolic geometry in an essential way. Then, in Sections 3 and 4 we deduce from it Theorems  \ref{Hardy} and \ref{Bergman}, respectively. Finally, in Section 5 we briefly discuss an application of Theorem  \ref{Hardy} to  coefficient estimates for analytic functions.
\section{monotonicity for the superlevel sets}
Let $f$ be a function  analytic in $\D$  such that $u(z) = |f(z)|^a (1-|z|^2)^b$ is bounded and goes to $0$ uniformly as $|z|\to 1$. Then the superlevel sets $A_t = \{z: u(z) > t\}$ for $t > 0$ are compactly embedded into $\D$ and thus have finite hyperbolic measure $\mu(t) = m(A_t)$. The goal of this section is to prove the following theorem which says that a certain quantity related to this measure is decreasing.
\begin{theorem}\label{monotone}
Let $f:\D \to \Cm$ be an analytic function such that the function $u(z) = |f(z)|^a (1-|z|^2)^b$ is bounded and $u(z)$ tends to $0$ uniformly as $|z|\to 1$. Then the function $g(t) = t^{1/b}(\mu(t) + 1)$ is decreasing on the interval $(0, t_0)$, where $t_0 = \max_{z\in\D} u(z)$.
\end{theorem}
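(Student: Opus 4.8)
The plan is to reduce the claimed monotonicity to a differential inequality for $\mu$ and then establish that inequality through the sharp hyperbolic isoperimetric inequality. Since $g(t)=t^{1/b}(\mu(t)+1)$, at any point where $\mu$ is differentiable we have $g'(t)=t^{1/b-1}\left[\tfrac1b(\mu(t)+1)+t\mu'(t)\right]$, so it suffices to prove
$$-bt\,\mu'(t)\ge \mu(t)+1.$$
I would work with $v=\log u=a\log|f|+b\log(1-|z|^2)$. Away from the zeros of $f$ the function $\log|f|$ is harmonic, and a direct computation gives $\Delta_e\log(1-|z|^2)=-4/(1-|z|^2)^2=-4\pi\rho$, where $\rho=1/(\pi(1-|z|^2)^2)$ is the density of $m$ and $\Delta_e$ is the Euclidean Laplacian. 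Hence $\Delta_e v=-4\pi b\,\rho$ on the set where $f\ne0$. The key geometric observation is that, since $u$ vanishes at every zero of $f$ while $t>0$, no zero of $f$ lies in $A_t$; thus $v$ is smooth on a neighbourhood of $\overline{A_t}$ and the identity $\Delta_e v=-4\pi b\,\rho$ holds throughout $A_t$.

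By Sard's theorem almost every $t\in(0,t_0)$ is a regular value of $u$, and for such $t$ the boundary $\partial A_t=\{v=\log t\}$ is a smooth curve on which $\nabla_e v\ne0$. For these $t$ I would use two integral identities. First, the coarea formula gives $-t\mu'(t)=\int_{\partial A_t}\frac{\rho}{|\nabla_e v|}\,ds$, where $ds$ is Euclidean arc length (the factor $t$ arises because $|\nabla_e u|=t|\nabla_e v|$ on $\{u=t\}$). Second, since $\nabla_e v$ points into $A_t$ along the whole boundary (including the boundaries of any holes surrounding zeros of $f$), the divergence theorem yields
$$\int_{\partial A_t}|\nabla_e v|\,ds=-\int_{A_t}\Delta_e v\,dA=4\pi b\,\mu(t).$$

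Now I would combine these by Cauchy--Schwarz. Writing the hyperbolic perimeter of $A_t$ as $P=\int_{\partial A_t}\sqrt{\rho}\,ds$ (recall that $\sqrt{\rho}\,ds$ is hyperbolic arc length), we obtain
$$P^2=\left(\int_{\partial A_t}\sqrt{\rho}\,ds\right)^2\le\left(\int_{\partial A_t}\frac{\rho}{|\nabla_e v|}\,ds\right)\left(\int_{\partial A_t}|\nabla_e v|\,ds\right)=\bigl(-t\mu'(t)\bigr)\cdot 4\pi b\,\mu(t).$$
On the other hand, the isoperimetric inequality in the hyperbolic plane of curvature $-4\pi$ reads $P^2\ge 4\pi\mu(t)\bigl(1+\mu(t)\bigr)$, with equality for geodesic discs (which one checks directly from the formulas for their area and circumference), and it remains valid for the possibly disconnected, non-simply-connected region $A_t$. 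Substituting this lower bound for $P^2$ and cancelling $4\pi\mu(t)$ gives exactly $-bt\mu'(t)\ge\mu(t)+1$, as required. Note that the exponent $a$ has dropped out entirely, consistent with $g$ depending only on $b$.

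Finally, I would address the passage from this pointwise inequality to the monotonicity of $g$. Since $u$ is real-analytic away from the discrete zero set of $f$, its level sets are null and $\mu$ is continuous; moreover $\mu$ is non-increasing, hence differentiable almost everywhere with a non-positive singular part. The inequality above holds for a.e.\ $t$, and because the singular part of $\mu$ can only decrease $g$, it forces the distributional derivative of $g$ to be a non-positive measure, so $g$ is non-increasing on $(0,t_0)$. I expect the \emph{main obstacle} to be precisely this regularity bookkeeping --- justifying the coarea formula and divergence theorem on the irregular level-set geometry and controlling the singular part of $\mu$ --- rather than the geometric core, which is the clean interplay of the curvature identity $\Delta_e v=-4\pi b\,\rho$, Cauchy--Schwarz, and the sharp hyperbolic isoperimetric inequality.
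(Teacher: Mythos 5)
Your proposal is correct and follows essentially the same route as the paper: the coarea formula for $-\mu'(t)$, the divergence-theorem identity $\int_{\partial A_t}|\nabla \log u|\,ds=4\pi b\,\mu(t)$ coming from $\Delta\log u=-4\pi b/( \pi(1-|z|^2)^2)$ on $A_t$, Cauchy--Schwarz applied to the hyperbolic perimeter, and the hyperbolic isoperimetric inequality $P^2\ge 4\pi\mu(1+\mu)$, yielding $-bt\mu'(t)\ge\mu(t)+1$. The only difference is presentational (you work with $v=\log u$ and spell out the Sard/singular-part bookkeeping that the paper delegates to Lemma 3.2 of Nicola--Tilli).
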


The reason we consider this specific function $g$ is that for $f(z) \equiv 1$ the function $g$ turns out to be constant.

The proof of this theorem is mostly based on the methods developed in \cite{3}, translated from the euclidean to the hyperbolic setting. To this end, we introduce the hyperbolic length, associated with the measure $m$.
\begin{definition}
For a curve $\gamma \subset \D$ we define its hyperbolic length $|\gamma|_h$ as
$$|\gamma|_h = \intt_\gamma \frac{|dz|}{(1-|z|^2)\sqrt{\pi}}.$$
\end{definition}
\begin{proof}[Proof of Theorem \ref{monotone}]
We start from the formula
\begin{equation}\label{derivative}
-\mu'(t) = \int_{u = t}|\nabla u|^{-1}\frac{|dz|}{\pi (1-|z|^2)^2}
\end{equation}
along with the claim that $\{u = t\} = \partial A_t$ and that this set is a smooth curve for almost all $t\in (0, t_0)$. These assertions are by no means trivial, but the proof almost literally follows the proof of Lemma $3.2$ from \cite{3} and the discussion before it so we present here only the informal geometric reasoning for  formula \eqref{derivative}. Since for $z\in\partial A_t$ we have $u(z) = t$, $\nabla u$ is orthogonal to $\partial A_t$ and moreover it is pointing into $A_t$ since for $z\in A_t$ we have $u(z) > t$. Thus, when we decrease $t$ by a small number $\eps$ at each point of $\partial A_t$ the set $A_{t-\eps}$ is expanded in the direction orthogonal to $\partial A_t$ by the value about $\frac{\eps}{|\nabla u|}$. The factor $\frac{1}{\pi(1-|z|^2)^2}$ appears in \eqref{derivative} because we want to differentiate the hyperbolic measure of $A_t$ and not the euclidean one.

Following the approach from \cite{3}, our next step is to apply the Cauchy--Schwarz inequality to the hyperbolic length of $\partial A_t$:
\begin{equation}\label{CS}
|\partial A_t|_h^2 = \left(\int_{\partial A_t} \frac{|dz|}{\sqrt{\pi}(1-|z|^2)}\right)^2\le \left(\int_{\partial A_t} |\nabla u|^{-1} \frac{|dz|}{\pi (1-|z|^2)^2}\right)\left(\int_{\partial A_t} |\nabla u| |dz|\right).
\end{equation}
The first integral on the right-hand side is $-\mu'(t)$ so to proceed we have to analyze the second one. 

Let $\nu$ be the outward normal to $\partial A_t$. As explained above, $\nabla u$ is parallel to it but directed in the opposite direction. Thus we have $|\nabla u| = -\nabla u \cdot \nu$. Also, we note that since for $z\in \partial A_t$ we have $u(z) = t$, we have for $z\in \partial A_t$
$$\frac{|\nabla u|}{t} = \frac{|\nabla u|}{u} = -\frac{\nabla u\cdot \nu}{u} = -(\nabla \log u)\cdot \nu.$$

Now the second integral on the right-hand side of \eqref{CS} can be evaluated by Green's theorem:
\begin{equation}\label{by parts}
\int_{\partial A_t} |\nabla u||dz| = -t\int_{\partial A_t}(\nabla \log u)\cdot \nu = -t\int_{A_t} \Delta \log u(z) dxdy.
\end{equation}
Note that here it is important that $u(z)\ne 0$ for $z\in A_t$ so the function $\log u$ is well-defined on $A_t$ and $\partial A_t$. We have $\Delta \log u(z) = a\Delta \log |f(z)| + b\Delta \log (1-|z|^2)$. Since $f(z)\ne 0$ for $z\in A_t$, the first term is just $0$ while the second one is $-4b\frac{1}{(1-|z|^2)^2}$ which is proportional to the hyperbolic metric $m$.  Thus, the right-hand side of \eqref{by parts} is equal to $4\pi bt|A_t|_H$.

Combining everything, we get
$$-\mu'(t) \ge \frac{|\partial A_t|_h^2}{4bt|A_t|_H}.$$

Our next step is to use the isoperimetric inequality for the hyperbolic plane \cite{14} (see also \cite{15, 16}). In the case of curvature $-4\pi$ it says that for any set $A$ we have
$$|\partial A|_h^2 \ge 4\pi |A|_H + 4\pi|A|_H^2.$$

Using it with $A = A_t$ and recalling that $|A|_H = \mu(t)$ we get
\begin{equation}\label{diff}
-\mu'(t) \ge \frac{1+\mu(t)}{bt}.
\end{equation}
Note that here we used that $|A|_H > 0$ (that is, $t < t_0$) to avoid division by $0$.

Rewriting \eqref{diff} for $g(t) = t^{1/b}(\mu(t) + 1)$  we get $g'(t) = t^{1/b}(\frac{\mu(t) +1}{bt} + \mu'(t)) \le 0$, thus $g$ is decreasing as required. 
\end{proof}
We believe that the key reason why the above argument works is the proportionality  ${\Delta \log ||K_z||\sim m(z)}$, where $K_z$ are the reproducing kernels for our spaces. Note that the same kind of proportionality, in the Euclidean setting, played a crucial role in \cite{3}.

Note that for the function $f(z) \equiv 1$ everywhere in the proof above we have  equalities for all values of $a$ and $b$. Indeed, in the Cauchy--Schwarz inequality the functions are constant by radial symmetry, thus it is an equality, and the hyperbolic isoperimetric inequality is an equality exactly when the set is a hyperbolic disk. In fact, it is also an equality for all reproducing kernels, but verifying this for the Cauchy--Schwarz part is a rather long computation. This is also true in the upper half-plane. The reason we chose to work in the unit disk is that for $f(z)\equiv 1$ these equalities are easy to verify.
\section{Weak-type estimate for the Hardy spaces}
In this section we are going to prove the following bound for the measure of the superlevel sets of  functions from the Hardy spaces. Theorem \ref{Hardy} follows from it by a simple integration. In what follows we retain the notation from the previous section.
\begin{theorem}\label{hardythm}
Let $f\in H^p$ have norm $1$ and put $u(z) = |f(z)|^p(1-|z|^2)$. Then for all $t\in (0, \infty)$ we have
\begin{equation}\label{hardy bound}
\mu(t) \le \max\left(\frac{1}{t}-1,0\right).
\end{equation}
Equality in \eqref{hardy bound} holds for all $0 < t < \infty$ if $f(z) \equiv 1$.
\end{theorem}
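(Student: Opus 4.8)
The plan is to deduce this weak-type bound from the monotonicity Theorem~\ref{monotone}, the pointwise estimate \eqref{pointb}, and the description of the $H^p$ norm as a limit of Bergman norms. First I would specialize Theorem~\ref{monotone} to $a=p$ and $b=1$, so that $u(z)=|f(z)|^p(1-|z|^2)$ and $g(t)=t(\mu(t)+1)$. The pointwise bound \eqref{pointb} with $r=p$ gives $u(z)\le ||f||_{H^p}^p=1$, and $u\to0$ uniformly as $|z|\to1$, so the hypotheses of Theorem~\ref{monotone} hold and $t_0=\max_\D u\le1$. For $t\ge t_0$ we have $\mu(t)=0$, and since $t_0\le1$ the right-hand side of \eqref{hardy bound} is nonnegative there, so the bound is automatic. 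Thus everything reduces to the range $t\in(0,t_0)$, where $g$ is decreasing; and since $\mu(t)\le \tfrac1t-1$ is equivalent to $g(t)\le1$, it suffices to prove $g(\tau)\le1$ for every fixed $\tau\in(0,t_0)$.

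To establish this I would integrate $u$ against a power and let the power approach $1$. For $s>1$ the definition of the Bergman norm and the layer-cake formula give
$$(s-1)\int_\D u(z)^s\,dm(z)=\int_\D (s-1)|f(z)|^{ps}(1-|z|^2)^s\,dm(z)=||f||_{A^{ps}_s}^{ps}$$
and
$$\int_\D u(z)^s\,dm(z)=\int_0^{t_0}s\lambda^{s-1}\mu(\lambda)\,d\lambda .$$
Restricting the last integral to $(0,\tau)$ and using the monotonicity of $g$ in the form $g(\lambda)\ge g(\tau)$ for $\lambda\le\tau$, a short computation gives the lower bound
$$(s-1)\int_\D u(z)^s\,dm(z)\ge s\,g(\tau)\,\tau^{s-1}-(s-1)\tau^s .$$
As $s\to1^+$ the right-hand side tends to $g(\tau)$, while the left-hand side equals $||f||_{A^{ps}_s}^{ps}$ and tends to $||f||_{H^p}^p=1$ by the limiting relation $||f||_{H^p}=\lim_{\alpha\to1}||f||_{A^{p\alpha}_\alpha}$. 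Hence $g(\tau)\le1$, which is exactly the desired bound.

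I expect the delicate point to be this passage to the limit $s\to1^+$: one needs that $\int_\D u^s\,dm$ is finite for $s>1$ (it is, being a multiple of a Bergman norm of $f\in H^p$) and that $||f||_{A^{ps}_s}\to||f||_{H^p}$, which is precisely the stated convergence of Bergman norms to the Hardy norm. The role of the monotonicity of $g$ is to upgrade this single asymptotic identity at $s=1$ into the pointwise inequality $g(\tau)\le1$ at every level $\tau$. Finally, for $f\equiv1$ we have $u(z)=1-|z|^2$, so $A_t=\{|z|^2<1-t\}$ and a direct computation of its hyperbolic measure gives $\mu(t)=\tfrac1t-1$ for $0<t<1$ and $\mu(t)=0$ for $t\ge1$; thus \eqref{hardy bound} holds with equality for every $t$, completing the analysis.
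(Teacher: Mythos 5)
Your proposal is correct and follows essentially the same route as the paper: both apply Theorem~\ref{monotone} with $a=p$, $b=1$ to get that $g(t)=t(\mu(t)+1)$ is decreasing, lower-bound the $A^{ps}_s$-norm by restricting the layer-cake integral to $(0,\tau)$ where $\mu(\lambda)\ge g(\tau)/\lambda-1$, and let $s\to1^+$ using $\|f\|_{A^{ps}_s}\to\|f\|_{H^p}$. The only difference is cosmetic: the paper argues by contradiction from a hypothetical level $t_1$ with $g(t_1)=c>1$, whereas you derive $g(\tau)\le1$ directly.
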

Note that this theorem verifies Conjecture $2$ from \cite{7}.
\begin{proof}
Put $t_0 = \max_{z\in\D} u(z)$. By the pointwise bound \eqref{pointh} we have $t_0 \le 1$. In particular, for $t\ge t_0$ the bound holds trivially.

Assume that there exists some $0 < t_1 < t_0$ such that $\mu(t_1) > \frac{1}{t_1} - 1$. Then $\mu(t_1) = \frac{c}{t_1} - 1$ for some $c > 1$. We claim that in that case for all $0 < t < t_1$ we have $\mu(t) \ge \frac{c}{t}-1$. 

Indeed, applying the pointwise bound once again together with $u(z)\to 0$ as $|z|\to 1$, we see that Theorem \ref{monotone} can be applied to $f$ with $a = p, b = 1$, and we get that $g(t) = t(\mu(t) + 1)$ is decreasing. Since $g(t_1) = c$ we get $g(t) \ge c, 0 < t < t_1$, which corresponds to $\mu(t) \ge \frac{c}{t}-1$.

Next	 we are going to use the fact that $||f||_{A^{pr}_r}^{pr} \to ||f||_{H^p}^p = 1$ as $r\to 1$. Note that we can express the $A^{pr}_r$ norms via $\mu(t)$:
$$||f||_{A^{pr}_r}^{pr} = c_r \intt_0^{t_0}\mu(t)t^{r-1}dt,$$
where $c_r = r(r-1)$ is so that $c_r\intt_0^1 (\frac{1}{t}-1)t^{r-1}dt = 1$. The precise value of $c_r$ is not important for us except that $c_r\to 0$ as $r\to 1$ which corresponds to the fact that the norm in the Hardy space is supported on the circle $\partial \D$ and not on the whole disk $\D$. By the above bound we have
\begin{equation}\label{temp3}
||f||_{A^{pr}_r}^{pr} \ge c_r \intt_0^{t_1} (\frac{c}{t} - 1)t^{r-1}dt.
\end{equation}
We have
$$1 = c_r\intt_0^1 (\frac{1}{t}-1)t^{r-1}dt = c_r\intt_0^{t_1} (\frac{1}{t}-1)t^{r-1}dt + c_r\intt_{t_1}^1 (\frac{1}{t}-1)t^{r-1}dt = A(r)+B(r).$$

Since $c_r\to 0$ as $r\to 1$, we see that $B(r)\to 0$ as $r\to 1$ as well because the function we are integrating is bounded. Therefore, $A(r)\to 1$ as $r\to 1$. On the other hand, we have
$$\frac{\frac{c}{t}-1}{\frac{1}{t}-1} = c + \frac{c-1}{\frac{1}{t}-1}\ge c,$$
thus the right-hand side of \eqref{temp3} is at least $cA(r)$. Therefore $1 =\lim_{r\to 1}||f||_{A^{pr}_r}^{pr} \ge c\lim_{r\to 1}A(r)=c$ which is a contradiction. 
\end{proof}
\begin{remark}
By looking more closely at the above proof we can actually get the following formula for the $H^p$-norm:
\begin{equation}
||f||_{H^p}^p = \sup_{t: \mu(t) > 0} t(\mu(t)+1) = \lim_{t\to 0} t(\mu(t) + 1) = \lim_{t\to 0}t\mu(t),
\end{equation}
 which seems to be new.
\end{remark} 
\begin{proof}[Proof of Theorem \ref{Hardy}] If $\lim_{t\to 0^{+}}G(t) > 0$ then for all $f\in H^p$ with $||f||=1$ the integral in \eqref{hardyineq} is $+\infty$. Similarly if $\lim_{t\to 0^{+}}G(t)  < 0$ then the integral is always $-\infty$. Thus, we can assume that $\lim_{t\to 0^{+}}G(t)  = 0$. Then this integral can be expressed via $\mu(t)$ as 
\begin{equation}\label{dG}
\intt_0^\infty \mu(t)dG(t).
\end{equation}
Note that here we used that the function $\mu(t)$ is continuous, that is the sets $\{u(z) = t\}$ have zero measure. 

Since $G$ is increasing, measure $dG(t)$ is positive. Thus, by \eqref{hardy bound} this integral is at most
$$\intt_0^\infty \max\left(\frac{1}{t}-1, 0\right)dG(t),$$
which is the value of \eqref{hardyineq} for $f(z)\equiv 1$.
\end{proof}
\section{proof of Theorem \ref{Bergman}}
As in the proof of Theorem \ref{Hardy}, we begin by observing that if $\lim_{t\to 0^{+}}G(t)  \ne 0$, then the integral in \eqref{bergmanineq} is always $\pm \infty$. Thus, we restrict ourselves to the case $\lim_{t\to 0^{+}}G(t)  = 0$. In that case, the integral in \eqref{bergmanineq} can be rewritten as 
$$\int_0^\infty \mu(t) G'(t)dt,$$
where $\mu(t) = m(\{ z: u(z) > t\})$ and $u(z) = |f(z)|^p(1-|z|^2)^\alpha$. We wrote here $G'(t)dt$ instead of $dG(t)$ as in \eqref{dG} because $G$ is convex and hence its derivative is an actual function and not just a measure. We also assume that $||f||_{A^p_\alpha} = 1$, that is
$$\int_0^\infty \mu(t)dt = \frac{1}{\alpha-1}.$$
Applying Theorem \ref{monotone} to $f$ with $a = p, b = \alpha$ we get $\mu(t) = \frac{g(t)}{t^{1/\alpha}} - 1$ where $g$ is decreasing on $(0, t_0)$ with $t_0 = \max_{z\in \D}u(z)$. By the pointwise bound \eqref{pointb} we know that $t_0 \le 1$.

We are going to make a change of variables $x = t^{1-1/\alpha}$ in both of the above integrals. We get
\begin{equation}\label{temp5}
\intt_0^{x_0}(h(x) - x^{1/(\alpha-1)})dx = \frac{1}{\alpha},
\end{equation}
where $x_0 = t_0^{1-1/\alpha} \le 1$ and $h(x) = g(x^{\alpha/(\alpha - 1)})$ is a decreasing function. We want to maximize
\begin{equation}\label{temp6}
\intt_0^{x_0}(h(x) - x^{1/(\alpha-1)})s(x)dx,
\end{equation}
where $s(x) = G'(x^{\alpha/(\alpha - 1)})$ is an increasing function. From now on we will treat $h$ as a generic decreasing function, forgetting for a while that it came from the distribution of a holomorphic function, and we also fix $x_0$ for now. To proceed we need the following lemma.
\begin{lemma}\label{decreasing lemma}
Let $s(x), 0 \le x \le x_0$, be a function satisfying 
\begin{equation}\label{weaker}
\int_0^Xs(x)dx \le Xs(X)
\end{equation}
for all $0 \le X \le x_0$. Then among all decreasing functions $h:[0, x_0]\to \R$ with $\int_0^{x_0}h(x) = c$ the constant function $h(x) = \frac{c}{x_0}$ maximizes  $\int_0^{x_0}s(x)h(x)dx$.
\end{lemma}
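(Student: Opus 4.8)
The plan is to prove the sharper statement that \emph{every} admissible $h$ satisfies $\int_0^{x_0} s(x)h(x)\,dx \le M\int_0^{x_0}h(x)\,dx$, where $S(x) = \int_0^x s(y)\,dy$ and $M = S(x_0)/x_0$; since the constant $h\equiv c/x_0$ turns this into an equality (both sides equal $Mc$), this immediately yields the lemma. The only role of hypothesis \eqref{weaker} is to control the primitive $S$: rewriting it as $S(X)\le X s(X)$ and noting that $S$ is absolutely continuous, one finds $\frac{d}{dX}\big(S(X)/X\big) = \big(Xs(X)-S(X)\big)/X^2 \ge 0$ almost everywhere, so $S(X)/X$ is nondecreasing on $(0,x_0]$. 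In particular
$$S(y) \le \frac{S(x_0)}{x_0}\,y = My, \qquad 0 \le y \le x_0,$$
with equality at $y = x_0$. This pointwise bound on $S$ is all I will extract from \eqref{weaker}.

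Next I would encode the monotonicity of $h$ through the positive Lebesgue--Stieltjes measure $d\mu = -dh \ge 0$, writing $h(x) = h(x_0) + \mu((x,x_0])$ for $x\in[0,x_0]$. Substituting this representation and exchanging the order of integration by Fubini, both the objective and the constraint become linear functionals of the pair $(h(x_0),\mu)$:
$$\int_0^{x_0} s(x)h(x)\,dx = h(x_0)S(x_0) + \int_{[0,x_0]} S(y)\,d\mu(y), \qquad \int_0^{x_0} h(x)\,dx = h(x_0)\,x_0 + \int_{[0,x_0]} y\,d\mu(y),$$
where I used $\int_0^{x_0}\mathbf{1}_{[0,y)}(x)\,dx = y$ and $\int_0^{x_0} s(x)\mathbf{1}_{[0,y)}(x)\,dx = S(y)$.

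Subtracting $M$ times the constraint from the objective and using $S(x_0) = M x_0$ cancels the boundary term and leaves
$$\int_0^{x_0} s(x)h(x)\,dx - M\int_0^{x_0} h(x)\,dx = \int_{[0,x_0]}\big(S(y)-My\big)\,d\mu(y) \le 0,$$
since $S(y)-My \le 0$ by the first step and $\mu \ge 0$. This is exactly the desired inequality, and the constant function achieving equality completes the proof. The step I expect to require the most care is the Stieltjes representation together with the Fubini exchange: a decreasing $h$ with finite integral may be unbounded as $x\to 0^+$, so $\mu$ can carry infinite mass near $0$, and one must check that $\int y\,d\mu(y)$ and $\int S(y)\,d\mu(y)$ converge and that the contributions at $0$ and at a possible atom of $\mu$ at $x_0$ are correctly accounted for. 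These points are routine once one observes that $s$ is bounded on $[0,x_0]$ (so $S(y) = O(y)$ near $0$) and that $\int_0^{x_0} h$ is finite, but this is where the analytic bookkeeping lives.
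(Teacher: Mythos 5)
Your proposal is correct and is essentially the paper's own argument: the Stieltjes representation $h(x)=h(x_0)+\mu((x,x_0])$ with $\mu=-dh$ followed by Fubini is exactly the measure-theoretic form of the paper's integration by parts with $l(x)=-h'(x)$, and the decisive step in both is that \eqref{weaker} makes $S(x)/x$ nondecreasing, so $S(y)\le (S(x_0)/x_0)\,y$ against the positive measure $-dh$. Your write-up in fact carries out the general (non-$C^1$) case that the paper only claims follows by ``slight modification'' of its $C^1$ argument.
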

Note that any increasing function $s$ trivially satisfies  condition \eqref{weaker}. However, even though there is a slightly simpler proof of this lemma in the case of increasing $s$, the proof of the general case is still short, and therefore we decided to prove lemma in full generality.
\begin{proof}For clarity of exposition, we will only prove the lemma in the case when the function $h$ is in $C^1([0, x_0])$. The proof in the general case can be obtained by the slight modification of the argument below.

Put $S(X) = \int_0^Xs(x)dx$ and $l(x) = -h'(x)\ge 0$ and integrate by parts in both integrals. We get
$$x_0h(x_0) + \int_0^{x_0}xl(x)dx = c,$$
and we want to maximize
\begin{equation}\label{temp4}
S(x_0)h(x_0) + \int_0^{x_0}S(x)l(x)dx.
\end{equation}
Note that $\left(\frac{S(x)}{x}\right)' = \frac{xs(x) -S(x)}{x^2}\ge 0$, thus $\frac{S(x)}{x}$ is increasing. Therefore we have
$$\int_0^{x_0}S(x)l(x)dx \le \int_0^{x_0}x\frac{S(x_0)}{x_0}l(x)dx = \frac{S(x_0)}{x_0}\int_0^{x_0}xl(x)dx.$$
Plugging this into \eqref{temp4} we get
$$S(x_0)h(x_0) + \int_0^{x_0}S(x)l(x)dx \le \frac{S(x_0)}{x_0}\left(x_0h(x_0) + \int_0^{x_0}xl(x)dx\right) = c\frac{S(x_0)}{x_0}.$$
For $l(x) = 0$ (that is, $h(x)$ being constant) we have equality here.
\end{proof}
Note that, for a fixed $x_0$, we know the value of $\int_0^{x_0}h(x)dx$ from \eqref{temp5}, while in \eqref{temp6} we want to maximize $\int_0^{x_0}h(x)s(x)dx$ plus some constant depending on $x_0$. Therefore, if $x_0$ is fixed, then we may assume that  $h(x)$ is constant and equal to
$$C(x_0) = \frac{1}{x_0}\left(\frac{1}{\alpha}+ \frac{\alpha-1}{\alpha} x_0^{\alpha/(\alpha-1)}\right),$$
while \eqref{temp6} is equal to
$$A(x_0) = C(x_0)S(x_0) - \int_0^{x_0}x^{1/(\alpha-1)}s(x)dx.$$

Our next goal is to show that $A(x_0)$ is an increasing function of $x_0$. To do so we first assume that $C(x_0)$ is decreasing which we will prove later. Taking the derivative of $A$ we get
$$A'(x_0) = C'(x_0)S(x_0) + C(x_0)s(x_0) - x_0^{1/(\alpha-1)}s(x_0).$$
Since $S(x_0) \le x_0s(x_0)$ and $C'(x_0) \le 0$, this quantity is at least
$$C'(x_0)x_0s(x_0) + C(x_0)s(x_0) - x_0^{1/(\alpha-1)}s(x_0) = s(x_0)\left(x_0C'(x_0) + C(x_0) - x_0^{1/(\alpha-1)}\right).$$
The expression in the brackets turns out to be exactly $0$ -- this can be either verified by a direct computation or, slightly informally, deduced from the fact that for $s(x) = 1$ (which corresponds to $G(x) = x$) we should always get the same value and all our inequalities turn into equalities.

It remains to show that $C(x_0)$ is a decreasing function. We first outline an informal argument, which this time is a bit harder to make rigorous, so later we will explicitly compute $C'(x_0)$ and verify that it is negative. The informal argument is as follows: if we treat $h$ as coming from the distribution of the analytic function and if for some $x_1 < x_2$ we would have $C(x_1) < C(x_2)$, then this would mean that all the superlevel sets of the function corresponding to $x_2$ have larger hyperbolic measure so they can not both have the same $A^p_\alpha$-norm. But since we already abandoned Bergman spaces by treating $h$ as a generic decreasing function this argument is not rigorous so we will just compute $C'(x_0)$:
$$C'(x_0) = \frac{x_0^{a/(a-1)}-1}{ax_0^2}.$$
Thus, $C'(x_0)$ is nonpositive for $0 < x_0 \le 1$, therefore $A(x_0) \le A(1)$ for $0 < x_0 \le 1$.

 It remains to note that for the case $f(z)\equiv 1$ we have $x_0 = 1$ and $h(x)$ is constant, thus the value of \eqref{bergmanineq} is exactly $A(1)$ and the Theorem is proved.
\qed
\begin{remark}\label{rem}
As can be seen from the proof, it is enough to assume that the function $s(x) = G'(x^{\alpha/(\alpha-1)})$ satisfies \eqref{weaker} for all $X>0$ which is strictly weaker than the assumption that $G$ be convex.
\end{remark}
\begin{remark}
Note that we actually proved that the value of \eqref{bergmanineq} is at most $A(x_0)$ where $x_0 = t_0^{1-1/\alpha},\, t_0 = \max_{z\in \D} u(z)$. In particular, if the function $G$ is strictly convex such as $G(t)=t^s, 1 < s < \infty$ then the function $A$ is strictly increasing and thus the reproducing kernels are the only maximizers of \eqref{bergmanineq}. 
\end{remark}
\section{Coefficient estimates for Hardy spaces}
An important special case of contraction from the Hardy space to a Bergman space is $H^p \subset A^2_{2/p}$ for $0 < p < 2$. It turns out that, given $f(z) = \sum_{n = 0}^\infty a_nz^n\in A^2_{2/p}$, we can express its norm as follows
$$||f||_{A^2_{2/p}}^2 = \summ_{n = 0}^\infty \frac{|a_n|^2 }{c_{2/p}(n)},\quad c_{2/p}(n) = \binom{n + 2/p -1}{n}.$$
Thus, for a function $f\in H^p$ we have
\begin{equation}\label{coeff}
\summ_{n = 0}^\infty \frac{|a_n|^2}{c_{2/p}(n)} \le ||f||_{H^p}^2.
\end{equation}
This can be seen as sharpening of the classical Hardy-Littlewood inequality \cite{1}. Before only partial results were obtained in the direction of this inequality, such as proving it in the case $\frac{2}{p}\in \N$ \cite{5} and proving it for the first few coefficients \cite{2, 10} or with a constant slightly worse than $1$ \cite{11}. Note that inequality \eqref{coeff} is sharp in every coefficient since it turns into an equality for all reproducing kernels.

By the inductive argument from \cite{4} inequality \ref{coeff} can be extended to the functions from the Hardy space on the multidimensional disk $\D^k$: for $f(z_1, \ldots , z_k) = \summ_{n_1, \ldots, n_k\ge 0} a_{n_1, n_2, \ldots , n_k}z_1^{n_1}\ldots z_k^{n_k}$ we have
\begin{equation}\label{coef}
\summ_{n_1, \ldots, n_k\ge 0} \frac{|a_{n_1, n_2, \ldots , n_k}|^2}{c_{2/p}(n_1)\ldots c_{2/p}(n_k)}\le ||f||_{H^p(\D^k)}.
\end{equation}

By passing to the limit and noting that $c_{2/p}(0) = 1$, we can extend this inequality to the analytic functions on the infinite-dimensional disk $\D^\infty$. These spaces in turn can be realized as  spaces of Dirichlet series in the right half-plane. In this setting \ref{coef} takes the following form.
\begin{theorem}
For a Dirichlet polynomial $f(z) = \sum_{n = 1}^N \frac{a_n}{n^s}$ and $0 < p < 2$ we have
$$\sum_{n = 1}^N \frac{|a_n|^2}{d_{2/p}(n)} \le ||f||_{\mathcal{H}^p}^2,$$
where $d_{2/p}(n)$ are the coefficients in $\zeta(s)^{2/p} = \sum_{n = 1}^\infty \frac{d_{2/p}(n)}{n^s}$ and 
$$||f||_{\mathcal{H}^p}^p = \lim_{T\to \infty} \frac{1}{2T}\int_{-T}^T |f(it)|^pdt.$$
\end{theorem}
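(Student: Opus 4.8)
The plan is to reduce the statement to the polydisk inequality \eqref{coef} via the Bohr lift. Writing the primes as $p_1 < p_2 < \cdots$ and using the fundamental theorem of arithmetic, each $n$ factors uniquely as $n = p_1^{\alpha_1}\cdots p_k^{\alpha_k}$, so the substitution $z_j = p_j^{-s}$ turns $n^{-s}$ into the monomial $z_1^{\alpha_1}\cdots z_k^{\alpha_k}$. Since $f$ is a Dirichlet polynomial, only finitely many primes occur, with bounded exponents, so this substitution sends $f$ to an honest polynomial
$$F(z_1, \ldots, z_k) = \summ_{n=1}^N a_n z_1^{\alpha_1(n)}\cdots z_k^{\alpha_k(n)}$$
in finitely many variables, whose power-series coefficient indexed by the multi-index $(\alpha_1, \ldots, \alpha_k)$ is exactly $a_n$ for $n = p_1^{\alpha_1}\cdots p_k^{\alpha_k}$.

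The key analytic input is the identity $\|f\|_{\mathcal{H}^p} = \|F\|_{H^p(\D^k)}$, and I expect this to be the main point to justify carefully. It follows from Weyl's equidistribution theorem: on the imaginary axis we have $f(it) = F(p_1^{-it}, \ldots, p_k^{-it})$, and since $\log p_1, \ldots, \log p_k$ are linearly independent over $\Q$ (again by unique factorization), the curve $t \mapsto (p_1^{-it}, \ldots, p_k^{-it})$ equidistributes on the torus $\T^k$. Consequently the time average defining the $\mathcal{H}^p$-norm converges to the space average,
$$\lim_{T\to\infty}\frac{1}{2T}\intt_{-T}^T |F(p_1^{-it}, \ldots, p_k^{-it})|^p\, dt = \intt_{\T^k}|F(w)|^p\, dw = \|F\|_{H^p(\D^k)}^p,$$
so in particular the limit exists and equals $\|F\|_{H^p(\D^k)}^p$.

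With these two reductions in hand, I would apply \eqref{coef} to the polynomial $F$, giving
$$\summ_{n=1}^N \frac{|a_n|^2}{c_{2/p}(\alpha_1)\cdots c_{2/p}(\alpha_k)} \le \|F\|_{H^p(\D^k)}^2 = \|f\|_{\mathcal{H}^p}^2,$$
where on the left $n = p_1^{\alpha_1}\cdots p_k^{\alpha_k}$. It remains only to match the denominators. The generating identity $(1-z)^{-2/p} = \summ_{m=0}^\infty c_{2/p}(m)z^m$ together with the Euler product gives
$$\zeta(s)^{2/p} = \prd_{j}(1 - p_j^{-s})^{-2/p} = \prd_j \summ_{m=0}^\infty c_{2/p}(m) p_j^{-ms},$$
so reading off the coefficient of $n^{-s}$ shows that $d_{2/p}$ is multiplicative with $d_{2/p}(p^m) = c_{2/p}(m)$, i.e. $d_{2/p}(n) = c_{2/p}(\alpha_1)\cdots c_{2/p}(\alpha_k)$ for $n = p_1^{\alpha_1}\cdots p_k^{\alpha_k}$. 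Substituting this into the displayed inequality yields exactly the claimed bound.

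The main obstacle is the norm transference $\|f\|_{\mathcal{H}^p} = \|F\|_{H^p(\D^k)}$; the equidistribution argument is clean for polynomials, but one must be slightly careful that the boundary values $F(p_1^{-it}, \ldots, p_k^{-it})$ genuinely realize the $H^p(\T^k)$ boundary integral. Everything else is bookkeeping: the passage to finitely many variables is free because $f$ is a polynomial, and the coefficient identity $d_{2/p}(n) = \prd_j c_{2/p}(\alpha_j)$ is a direct consequence of the Euler product.
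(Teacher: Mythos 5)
Your proposal is correct and follows essentially the same route as the paper: the paper deduces the theorem from the polydisk inequality \eqref{coef} via the Bohr correspondence between $H^p(\D^\infty)$ and $\mathcal{H}^p$, which is exactly your Bohr lift plus the Euler-product identity $d_{2/p}(n)=\prod_j c_{2/p}(\alpha_j)$. You merely spell out the details the paper leaves implicit (Weyl equidistribution for the norm transference, and the observation that a Dirichlet polynomial needs only finitely many variables), which is a sound and standard way to make the reduction rigorous.
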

Combining this theorem with the methods from \cite{12} we can slightly improve the bounds for the pseudomoments of the Riemann zeta function, though the improvement is only in the constant factor and not asymptotic. 
\subsection*{Acknowledgments} I would like to thank Kristian Seip for introducing me to the topic of contractive inequalities, Yurii Lyubarskii for telling me about the paper \cite{3}, Ivan Izmestiev for a discussion on isoperimetric inequalities and Fabio Nicola, Jan Philip Solovej and Paolo Tilli for helpful comments. 

\end{document}